\newtheorem{assumption}[theorem]{Assumption}
\begin{document}
\title{Fixed Point Theory Analysis of a Lambda Policy Iteration with Randomization for the \'Ciri\'c Contraction Operator}
\titlerunning{Fixed Point Theory Analysis of a Lambda Policy Iteration}
\author{Abdelkader Belhenniche\inst{1}\orcidID{0000-0003-4272-6232} \and \\
Roman Chertovskih\inst{1}
\orcidID{0000-0002-5179-4344}
}
\authorrunning{A. Belhenniche and R. Chertovskih}
\institute{Faculty of Engineering, Research Center for Systems and Technologies (SYSTEC), ARISE, University of Porto  Rua  Dr. Roberto Frias s/n, 4200-465 Porto, Portugal \email{belhenniche@fe.up.pt, roman@fe.up.pt}}

\maketitle

\begin{abstract}
We apply methods of the fixed point theory to a Lambda policy iteration with a randomization algorithm for weak contractions mappings. This type of mappings covers a broader range than the strong contractions typically considered in the literature, such as \'Ciri\'c contraction. Specifically, we explore the characteristics of reinforcement learning procedures developed for feedback control within the context of fixed point theory. Under relatively general assumptions, we identify the sufficient conditions for convergence with a probability of one in infinite-dimensional policy spaces.
\keywords{Reinforcement learning \and Lambda Policy Iteration \and \'Ciri\'c contraction }
\end{abstract}
\section{Introduction}
\quad In \cite{B54} Richard Bellman introduced the concept of dynamic programming. His main idea was to arrange a problem into easier embedded sub-problems which can be solved recurrently in time. The influence of dynamic programming techniques in optimization-based feedback control was extremely important, and, since then, it has been extended to many  control problems, particularly, impulsive control (see \cite{Fraga2011}, \cite{AJP03}, and references therein).

\medskip

\quad  An important class of these iterative schemes is designated by Reinforcement Learning (RL) algorithms. Bertsekas and Tsitsikilis in \cite{Bertsekas1996} offer a broad range of RL algorithms in the framework of the Value Iteration (VI) and Policy Iteration (PI) methods. 

 \medskip
 
 \quad In \cite{Ber-Ioffe1996}, Bertsekas and Ioffe provided the analysis of Temporal Differences policy iteration ($ TD( \lambda) $) scheme in the context of Neuro-Dynamic Programming framework in which they show that $ TD( \lambda) $ scheme can be embedded into a $ PI $ scheme, denoted by $ \lambda $-PIR. In \cite{Bertsekas2018b}, Bertsekas explores the connection between $ TD( \lambda) $ and proximal algorithms, which are more suitable for solving convex and optimization problems.

 \medskip
 
\quad In \cite{Li2020}, Yachao et al. use abstract DP models and extend the $ \lambda $-PIR scheme for finite policy problems to contractive models with infinite policies. The authors construct a well-posed and compact operator, playing a critical role in the algorithm, and defining the convergence conditions with probability one of the $ \lambda $-PIR scheme for problems with infinite-dimensional policy spaces. In \cite{ASF}, Belhenniche et al. investigate the properties of reinforcement learning procedures  developed for feedback control in the framework of fixed-point theory. Also, under general assumptions, the authors determine sufficient convergence conditions with probability one in infinite-dimensional policy spaces.

\medskip

Fixed-point theory is an effective instrument in topology, nonlinear analysis, optimal control, and machine learning. The well-known Banach contraction states that where $ (X, d) $ is a complete metric space, and $ T: X \rightarrow X $ is a mapping satisfying
\begin{equation}\label{BanachContraction}
 d(T(x), T(y)) \leq \gamma d(x, y),
\end{equation}
 for some $ \gamma \in (0, 1) $ and for all $ x, y \in X $, then $ T $ has a unique fixed point $ x^{*}$ and the sequence $\{x_n\}$ generated by the iterative process $ x_{n+1}= Tx_{n} $  converges to $ x^{*} $ for some $ x \in X $. The generalization of the Banach contraction has been widely studied in multiple settings. Ćirić   \cite{Cir74} presented a totally independent type of contraction, leading to a unique fixed point.
 
 \medskip
 

\quad In this article, we apply methods of fixed point theory to investigate a Lambda policy iteration with a randomization algorithm for mappings that are merely weak contractions. We use the Ciric's operator to guarantee the existence and uniqueness given by a fixed-point theorem, as well as the convergence of the considered operators. The practical relevance of this result is that it can be applied even to discontinuous operators. Recall that, a mapping $F: X\to X$ is called to be a \'Ciri\'c, if there exists $ \gamma \in (0, 1) $ , such that: 
 \begin{equation}
d(Fx,Fy)\leq\gamma\max\left(  
d(x,y),d(x, Fx), d(y, Fy),\frac{1}{2}(d(x,Fy)
		+d(y,Fx))
  \right)
\label{b(1)}
\end{equation}%
	for all $x$, $y\in X$,
\newline

The paper is organized as follows. In the next section, we formulate the iterative feedback control problem to be studied further. In the next section we present several results related to the fixed-point theory, which are fundamental for the further development of the study. In section \ref{Main}, we summarize some of the implications of the \'Ciri\'c  contraction assumptions.  The $\lambda$-policy iteration with randomization scheme is formulated in section \ref{sec:Randomization} for the \'Ciri\'c operators. In section \ref{sec:Convergence+Stability}, we prove the convergence in norm with probability one of the iterative procedure. In the last section the conclusions and our plans for future work are presented.

\section{Preliminaries and Auxiliary Results}\label{Perlim}
We consider a set $X$ of states, a set $U$ of controls, and, for each $x \in X$, a nonempty control constraint $U(x) \subset U$. We denote by $M$ the set of all functions $\mu : X \rightarrow U$ with $\mu(x) \in U(x)$ for all $x \in X$, which in the  analysis below will be referred to as a policy.

We denote by $\upsilon(X)$ the set of functions $V: X \rightarrow \mathbb{R}$ and by $\bar{\upsilon}(X)$ the set of functions $V: X \rightarrow \overline{\mathbb{R}}$, where $\overline{\mathbb{R}} = \mathbb{R} \cup \{-\infty, \infty\}$.

We study the operator of the form
$$H: X \times U \times \upsilon(X) \rightarrow \mathbb{R}. $$
and, for each policy $\mu \in M$, we consider the mapping $F_{\mu}: \upsilon(X) \rightarrow \upsilon(X)$ defined by
$$F_{\mu}V(x) := H(x, \mu(x), V), \quad \forall x \in X.$$
and a mapping $F: \upsilon(X) \rightarrow \bar{\upsilon}(X)$ defined by
$$ FV(x) := \inf_{\mu \in M} \{F_{\mu}V(x)\}, \quad \forall x \in X.$$
Using the definitions presented above, we find:
$$ FV(x) = \inf_{\mu \in M}\{H(x, \mu(x), V)\} = \inf_{u \in U}\{H(x, u, V)\}.$$

For a given positive function $\nu : X \rightarrow \mathbb{R}$, we denote by $B(X)$ the set of functions $V$ such that $\| V \| < \infty$, where the norm $\| \cdot \|$ on $B(X)$ is defined by
$$
\| V \| = \sup_{x \in X} \left\{\frac{| V(x) |}{\nu (x) }\right\}.
$$

It is evident that the following lemma holds:

\begin{lemma}\label{Lemma 1}
$B(X)$ is complete with respect to the topology induced by $\| \cdot \|$.
\end{lemma}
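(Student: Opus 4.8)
The plan is to follow the standard recipe for proving that a weighted sup-norm space is a Banach space: take an arbitrary Cauchy sequence, produce a candidate limit by pointwise convergence, verify that the limit belongs to $B(X)$, and finally upgrade pointwise convergence to convergence in the norm $\|\cdot\|$.

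First I would fix a Cauchy sequence $\{V_n\}$ in $(B(X), \|\cdot\|)$ and observe that for every fixed state $x \in X$ the pointwise differences are controlled by the norm, since
$$\frac{|V_n(x) - V_m(x)|}{\nu(x)} \le \|V_n - V_m\|.$$
Because $\nu(x)$ is a fixed positive real number, this shows that $\{V_n(x)\}$ is Cauchy in $\mathbb{R}$; by completeness of $\mathbb{R}$ it converges, and I would define the candidate limit by $V(x) := \lim_{n\to\infty} V_n(x)$ for each $x \in X$.

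Next I would check that $V \in B(X)$. Any Cauchy sequence is bounded in norm, so there is a constant $C$ with $\|V_n\| \le C$ for all $n$, which gives $|V_n(x)| \le C\,\nu(x)$; letting $n \to \infty$ preserves the inequality, so $|V(x)| \le C\,\nu(x)$ and hence $\|V\| \le C < \infty$. To obtain convergence in norm, I would use the Cauchy estimate: given $\varepsilon > 0$, choose $N$ so that $\|V_n - V_m\| \le \varepsilon$ for all $n,m \ge N$, which means $|V_n(x) - V_m(x)| \le \varepsilon\, \nu(x)$ uniformly in $x$. Fixing $n \ge N$ and passing to the limit $m \to \infty$ yields $|V_n(x) - V(x)| \le \varepsilon\, \nu(x)$ for every $x$, and taking the supremum over $x$ gives $\|V_n - V\| \le \varepsilon$, so $V_n \to V$ in $B(X)$.

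The argument is largely routine, and the only point requiring genuine care is the interchange of the limit $m \to \infty$ with the supremum over $x \in X$. The clean way to handle this, which I would follow, is to keep the supremum outside: first establish the pointwise bound $|V_n(x) - V(x)| \le \varepsilon\,\nu(x)$ for each fixed $x$ (where only an ordinary real-sequence limit is taken), and only afterwards take the supremum over $x$. This avoids interchanging a supremum with a limit and is the standard subtlety in completeness proofs for weighted sup-norm spaces.
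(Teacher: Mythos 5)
Your proof is correct and complete: the paper itself offers no proof of this lemma (it is stated as ``evident''), and your argument is precisely the standard completeness argument for weighted sup-norm spaces that the authors implicitly rely on. The one point of genuine care --- establishing the pointwise bound $|V_n(x) - V(x)| \le \varepsilon\,\nu(x)$ first and only then taking the supremum over $x$, rather than interchanging a limit with a supremum --- is handled correctly.
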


It is also clear that $B(X)$ is closed and convex. Thus, given $\{V_{k}\}_{k=1}^\infty \subset B(X)$ and $V \in B(X)$, if $V_{k} \rightarrow V$ in the sense that $\displaystyle\lim_{k \rightarrow \infty} \| V_{k} - V \| = 0$, then $\displaystyle\lim_{k \rightarrow \infty} V_{k}(x) = V(x)$ for all $x \in X$.

Next, we introduce the following standard assumptions.

\begin{assumption}\label{Assumption 1}
(Well-posedness) For all $V \in B(X)$ and $\forall \mu \in M$, $F_{\mu} V \in B(X)$ and $FV \in B(X)$.
\end{assumption}

\begin{definition}\label{Definition 1} (\'Ciri\'c operator)
A self-map $F_{\mu}$ of $B(X)$ is called a \'Ciri\'c  operator if for all $V, V^{'} \in B(X)$ there exists $k \in (0, 1)$ such that:
\begin{align}\label{KC1}
\begin{split}
\| F_{\mu}V - F_{\mu}V^{'}\| \leq & k \max \Bigl( \| V - V^{'} \|, \| V -  F_\mu V \|, \| V^{'} -  F_\mu V^{'}  \|,\\
&\frac{1}{2} (\| V -  F_\mu V^{'}  \|+ \| V^{'}   -  F_\mu V \|)\Bigr)
\end{split}
\end{align}
\end{definition}


\begin{assumption}\label{Assumption 2}
The self-map $F$ is a \'Ciri\'c operator.
\end{assumption}

\begin{example}\label{Example 1}
To see the relevance of this extension considered, consider the following example, where the \'Ciri\'c mapping is not a
contraction. Let $ T:X \to X$, defined by
\begin{align*}
Tx = \begin{cases}
\frac{x}{4} & \text{for } x \in [0, 1]\\
\frac{x}{5} & \text{for } x \in (1, 2]
\end{cases}
\end{align*}

The mapping $T $ is of a \'Ciri\'c type with $ \sigma = \displaystyle{\frac{1}{4}} $. Indeed, if both $ x $ and $ y $ are in $ X_{1}= [0, 1]$ or in $ X_{2} =(1, 2] $, then:
$$ d(Tx, Ty) \leq \frac{1}{4}\left| x-y \right|   $$
Now let $ x $  in $ X_{1}= [0, 1]$  and $y$ in $ X_{2} =(1, 2] $ then,
\\
for $ x >\frac{4}{5}y $
$$ d(Tx, Ty) =\frac{1}{4}\left| x-\frac{4}{5}y \right| \leq  \frac{1}{4}\left| x-\frac{1}{5}y \right| = \frac{1}{4} d(x, Ty);$$
for  $ x < \frac{4}{5}y $
$$ d(Tx, Ty) =\frac{1}{4}\left| \frac{4}{5}y-x\right| \leq  \frac{1}{4}\left| y-x \right| \leq \frac{1}{4} d(y, x).
$$
Therefore, $T$ satisfies the condition:
$$  d(Tx, Ty) \leq \frac{1}{4} \max \left( d(x, y) , d(x, Ty), d(y, Tx) \right) $$
and, hence, the inequality \eqref{b(1)} holds true.\\

Here, $F$ is discontinuous at $x=1$, consequently, Banach contraction is not satisfied. A \'Ciri\'c  operator doesn't have to be continuous in general, however, it is always convergent to a fixed point.
\end{example}

In what follows some results to be used further are presented.

\begin{theorem}\label{Theorem 1}(Existence)
Let the operators $F$ and $F_{\mu} : B(X) \rightarrow B(X)$ be a Ćirić, then $F$ and $F_{\mu}$ have, respectively, $V^{*}$ and $V_{\mu}$ as fixed points.
\end{theorem}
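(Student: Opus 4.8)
The plan is to establish existence of each fixed point by a Picard iteration on the complete space $(B(X), \|\cdot\|)$, whose completeness is supplied by Lemma~\ref{Lemma 1}. I will carry out the argument for $F$; the argument for $F_\mu$ is identical with $F$ replaced by $F_\mu$ throughout, since Assumption~\ref{Assumption 2} and Definition~\ref{Definition 1} make both \'Ciri\'c operators on $B(X)$. Fix an arbitrary $V_0 \in B(X)$ and define the sequence $V_{n+1} := F V_n$.

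First I would show that consecutive differences contract geometrically. Applying the \'Ciri\'c inequality \eqref{KC1} to the pair $(V_n, V_{n-1})$, the right-hand maximum reduces to $k\max(\|V_n - V_{n-1}\|, \|V_{n+1} - V_n\|, \tfrac12\|V_{n+1} - V_{n-1}\|)$, and the triangle inequality bounds the last entry by $\max(\|V_n - V_{n-1}\|, \|V_{n+1} - V_n\|)$. A dichotomy then arises: if the maximum were $\|V_{n+1} - V_n\|$, the relation $\|V_{n+1} - V_n\| \le k\|V_{n+1} - V_n\|$ with $k < 1$ would force $V_{n+1} = V_n$, already a fixed point; otherwise we obtain $\|V_{n+1} - V_n\| \le k\|V_n - V_{n-1}\|$. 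Iterating gives $\|V_{n+1} - V_n\| \le k^n \|V_1 - V_0\|$, and summing the geometric tail yields $\|V_m - V_n\| \le \tfrac{k^n}{1-k}\|V_1 - V_0\|$ for $m > n$. Hence $\{V_n\}$ is Cauchy and, by completeness, converges to some $V^* \in B(X)$.

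It remains to verify that the limit is fixed, which I expect to be the delicate step. Setting $r := \|FV^* - V^*\|$ and using $r \le \|FV^* - V_{n+1}\| + \|V_{n+1} - V^*\|$, I would estimate $\|FV^* - V_{n+1}\| = \|FV^* - FV_n\|$ by \eqref{KC1} applied to the pair $(V^*, V_n)$. The main obstacle is that the resulting maximum retains the terms $\|V^* - FV^*\| = r$ and $\tfrac12(\|V^* - V_{n+1}\| + \|V_n - FV^*\|)$, which do not vanish as $n \to \infty$; since $V_n \to V^*$, the latter tends to $\tfrac{r}{2} \le r$. Passing to the limit with a careful $\varepsilon$-argument, every term except $r$ becomes negligible, so the maximum is eventually $r$, giving $r \le k r$. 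As $k < 1$, this forces $r = 0$, i.e. $FV^* = V^*$. The identical construction applied to $F_\mu$ produces its fixed point $V_\mu$, which completes the proof.
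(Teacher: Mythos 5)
The paper itself offers no proof of Theorem~\ref{Theorem 1}: it is stated as an imported result (Ćirić's theorem, \cite{Cir74}) and the text moves straight on to Lemma~\ref{Lemma 2}, which is said to follow from it. Your proposal therefore cannot match the paper's argument, but it is a correct, self-contained proof. The Picard-iteration estimate is right: applying \eqref{KC1} to $(V_n,V_{n-1})$ does collapse the maximum to $\max\bigl(\|V_n-V_{n-1}\|,\|V_{n+1}-V_n\|,\tfrac12\|V_{n+1}-V_{n-1}\|\bigr)$, the triangle inequality disposes of the third entry, and the dichotomy yields either an immediate fixed point or the geometric decay $\|V_{n+1}-V_n\|\le k^n\|V_1-V_0\|$, whence Cauchyness and convergence in the complete space $B(X)$ (Lemma~\ref{Lemma 1}). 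Your limit step is also sound: with $r=\|FV^*-V^*\|$, the Ćirić bound on $\|FV^*-FV_n\|$ has $\limsup$ at most $k\max(r,\tfrac{r}{2})=kr$, giving $r\le kr$ and so $r=0$. Two remarks worth making. First, your argument works precisely because Definition~\ref{Definition 1} uses the averaged term $\tfrac12(\|V-F_\mu V'\|+\|V'-F_\mu V\|)$ rather than Ćirić's original quasi-contraction term $\max(\|V-F_\mu V'\|,\|V'-F_\mu V\|)$; with the full maximum, the entry $\|V_{n+1}-V_{n-1}\|$ could not be absorbed into the consecutive differences and one would need Ćirić's orbital-diameter argument instead — so your proof is genuinely more elementary, but only covers the contraction class the paper actually defines. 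Second, your construction proves more than is asked: it establishes convergence of the Picard iterates from an arbitrary starting point, which is exactly the content of Lemma~\ref{Lemma 2} that the paper leaves as a consequence, and a one-line addendum (if $V,W$ are both fixed, then $\|V-W\|\le k\|V-W\|$) would give uniqueness, justifying the notation $V^*$, $V_\mu$.
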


From Theorem \ref{Theorem 1}, it follows:

\begin{lemma}\label{Lemma 2}
The following holds:
\begin{itemize}
\item[i)] For an arbitrary $V_0 \in B(X)$, the sequence $\{V_k\}$, where $V_{k+1} = F_{\mu} V_k$, converges in norm to $V_{\mu}$.
\item[ii)] For an arbitrary $V_0 \in B(X)$, the sequence $\{V_k\}$, where $V_{k+1} = FV_k$, converges in norm to $V^{*}$.
\end{itemize}
\end{lemma}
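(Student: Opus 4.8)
The plan is to prove part (i) in full and then observe that part (ii) follows verbatim with $F$ and $V^{*}$ replacing $F_{\mu}$ and $V_{\mu}$, since Assumption \ref{Assumption 2} guarantees that $F$ is itself a \'Ciri\'c operator. Throughout, let $\gamma \in (0,1)$ denote the \'Ciri\'c constant of Definition \ref{Definition 1}, and write $d_k := \| V_{k+1} - V_k \|$ for the consecutive differences of the Picard iterates.

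First I would show that these differences decay geometrically. Applying the contraction estimate \eqref{KC1} with $V = V_k$ and $V^{'} = V_{k-1}$, and using $F_{\mu} V_k = V_{k+1}$ together with $F_{\mu} V_{k-1} = V_k$, the four arguments of the maximum simplify to $\| V_k - V_{k-1}\| = d_{k-1}$, $\| V_k - F_{\mu} V_k\| = d_k$, $\| V_{k-1} - F_{\mu} V_{k-1}\| = d_{k-1}$, while $\| V_k - F_{\mu} V_{k-1}\| = 0$ and $\| V_{k-1} - F_{\mu} V_k\| \le d_{k-1} + d_k$ by the triangle inequality. Since $\tfrac{1}{2}(d_{k-1}+d_k) \le \max(d_{k-1}, d_k)$, the maximum is at most $\max(d_{k-1}, d_k)$, so $d_k \le \gamma \max(d_{k-1}, d_k)$. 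If $d_k > d_{k-1}$ this would force $d_k \le \gamma d_k$, hence $d_k = 0$, contradicting $d_k > 0$; therefore $d_k \le \gamma d_{k-1}$ for all $k$, and by induction $d_k \le \gamma^{k} d_0$. It then follows that $\{V_k\}$ is Cauchy, because for $m > n$ the triangle inequality and summation of a geometric series give $\| V_m - V_n \| \le \sum_{j=n}^{m-1} d_j \le \gamma^{n} d_0 / (1-\gamma) \to 0$. By Lemma \ref{Lemma 1} the space $B(X)$ is complete, so $V_k \to \tilde{V}$ in norm for some $\tilde{V} \in B(X)$.

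The main obstacle is the final step, identifying $\tilde{V}$ with the fixed point, because a \'Ciri\'c operator need not be continuous (cf. Example \ref{Example 1}), so one cannot simply pass to the limit inside $F_{\mu}$. Instead I would bound $\| \tilde{V} - F_{\mu} \tilde{V}\|$ directly. The triangle inequality gives $\| \tilde{V} - F_{\mu} \tilde{V}\| \le \| \tilde{V} - V_{k+1}\| + \| F_{\mu} V_k - F_{\mu} \tilde{V}\|$, and \eqref{KC1} bounds the second summand by $\gamma$ times a maximum whose arguments are $\| V_k - \tilde{V}\|$, $\| V_k - V_{k+1}\| = d_k$, $\| \tilde{V} - F_{\mu} \tilde{V}\|$, and $\tfrac{1}{2}(\| V_k - F_{\mu} \tilde{V}\| + \| \tilde{V} - V_{k+1}\|)$. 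Letting $k \to \infty$, every term involving $V_k$ or $V_{k+1}$ vanishes, while $\| V_k - F_{\mu} \tilde{V}\| \to \| \tilde{V} - F_{\mu} \tilde{V}\|$; the resulting self-referential inequality $\| \tilde{V} - F_{\mu} \tilde{V}\| \le \gamma \| \tilde{V} - F_{\mu} \tilde{V}\|$ forces $\| \tilde{V} - F_{\mu} \tilde{V}\| = 0$, i.e. $\tilde{V} = F_{\mu} \tilde{V}$.

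Finally, since the \'Ciri\'c fixed point furnished by Theorem \ref{Theorem 1} is unique, the limit $\tilde{V}$ must coincide with $V_{\mu}$, establishing $V_k \to V_{\mu}$ and completing part (i); the identical chain of estimates applied to $F$ yields $V_k \to V^{*}$ in part (ii).
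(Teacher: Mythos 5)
Your proof is correct, but it takes a genuinely different route from the paper, which offers no argument at all for Lemma~\ref{Lemma 2}: the paper presents it as an immediate corollary of Theorem~\ref{Theorem 1}, i.e.\ it leans on \'Ciri\'c's fixed point theorem \cite{Cir74}, whose original formulation already asserts convergence of the Picard iterates to the unique fixed point. What you have done is reconstruct, self-containedly, the relevant part of that theorem for the averaged form of the contraction in Definition~\ref{Definition 1}: the geometric decay $d_k \leq \gamma d_{k-1}$ of consecutive differences (your case analysis ruling out $d_k > d_{k-1}$ is exactly the right trick, since the averaged fourth term is dominated by $\max(d_{k-1},d_k)$), the Cauchy/completeness step via Lemma~\ref{Lemma 1}, and---most importantly---the limsup argument identifying the limit as a fixed point without invoking continuity, which is precisely where a naive ``pass to the limit inside $F_\mu$'' would fail for the discontinuous operators of Example~\ref{Example 1}. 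Your approach buys a proof readable without consulting \cite{Cir74} and makes explicit the point (discontinuity) that the paper glosses over; the paper's approach buys brevity at the cost of opacity. One small wrinkle to fix: you appeal to uniqueness as ``furnished by Theorem~\ref{Theorem 1}'', but that theorem is stated as an existence result only. The gap is closed in one line: if $V$ and $W$ are both fixed points of $F_\mu$, then \eqref{KC1} gives
\begin{equation*}
\| V - W \| = \| F_\mu V - F_\mu W \| \leq \gamma \max \left( \| V - W \|, \, 0, \, 0, \, \tfrac{1}{2}\bigl(\| V - W \| + \| W - V \|\bigr) \right) = \gamma \| V - W \|,
\end{equation*}
hence $V = W$. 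With that sentence added, your argument for part (i) is complete, and the verbatim transfer to $F$ in part (ii) under Assumptions~\ref{Assumption 1} and~\ref{Assumption 2} is legitimate.
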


We will assume the following two properties to hold.

\begin{assumption}\label{Assumption 3} (Monotonicity) For all $V, V^{'} \in B(X)$, we have that $V \leq V^{'}$ implies
\[ H(x, u, V) \leq H(x, u, V^{'} ), \quad \forall x \in  X, \; u \in U(x) \]
where $\leq$ is in a pointwise sense in $X$.
\end{assumption}

\begin{assumption}\label{Assumption 4} (Attainability)
For all $V \in B(X)$, there exists $\mu \in M,$ such that $F_{\mu} V = FV$.
\end{assumption}

\section{Main Results}\label{Main}

In this section we introduce a proposition summarizing some fundamental consequences of the  \'Ciri\'c contraction assumptions.

\begin{proposition}\label{PropMain}
Let $F,  F_{\mu}$ be a \'Ciri\'c contractions satisfying Assumption \ref{Assumption 1}. Then the following statements hold:
\begin{enumerate}
    \item For any $V \in B(X)\colon$ 
    \begin{equation}\label{EQmain1}
        \Vert V^{*} - V \Vert \leq \gamma \Vert FV - V\Vert.
    \end{equation}
    
     \item For any $V \in B(X)$ and $\mu \in \mathcal{M}$:
    \begin{equation}\label{EQmain2}
         \Vert V_{\mu} - V \Vert \leq \gamma \Vert F_{\mu}V - V \Vert,
    \end{equation}
{\rm where} $\gamma = \max\left(\frac{2-\sigma }{2 - 2\sigma},  \frac{1 }{1 - \sigma}\right)$.
    
\end{enumerate}
\end{proposition}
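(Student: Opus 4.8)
The plan is to derive an \emph{a posteriori} residual estimate: the distance from an arbitrary $V$ to the fixed point $V^{*}$ is controlled by the residual $\Vert FV - V\Vert$. Throughout, write $\sigma \in (0,1)$ for the Ćirić constant of $F$ (denoted $k$ in Definition \ref{Definition 1}), and recall that Theorem \ref{Theorem 1} furnishes the fixed point $V^{*}$ with $FV^{*} = V^{*}$. For fixed $V \in B(X)$ I would introduce the abbreviations
$$ d := \Vert V^{*} - V\Vert, \qquad r := \Vert FV - V\Vert, \qquad a := \Vert V^{*} - FV\Vert = \Vert FV^{*} - FV\Vert. $$
The triangle inequality immediately gives $d \le a + r$, so it suffices to bound $a$ in terms of $d$ and $r$.

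First I would apply the Ćirić inequality \eqref{KC1} to the pair $(V^{*}, V)$. Using $FV^{*} = V^{*}$, the four arguments of the maximum simplify: $\Vert V^{*} - FV^{*}\Vert = 0$, $\Vert V - FV^{*}\Vert = \Vert V - V^{*}\Vert = d$, and $\Vert V^{*} - FV\Vert = a$. Hence
$$ a \;\le\; \sigma \max\!\left( d,\; 0,\; r,\; \tfrac{1}{2}(a + d) \right). $$

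Next I would split into cases according to which argument attains the maximum. If the maximum is $d$, then $a \le \sigma d$, and chaining with $d \le a + r \le \sigma d + r$ gives $d \le \frac{1}{1-\sigma}\, r$. If the maximum is $r$, then $a \le \sigma r$, whence $d \le (1+\sigma)\, r \le \frac{1}{1-\sigma}\, r$, a dominated case. The delicate case is the self-referential one, where the maximum is $\frac{1}{2}(a+d)$: here $a$ appears on both sides, so I would first solve $a \le \frac{\sigma}{2}(a+d)$ for $a$, obtaining $a \le \frac{\sigma}{2-\sigma}\, d$, and only then substitute into $d \le a + r$ to get $d \le \frac{2-\sigma}{2-2\sigma}\, r$. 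Taking the largest constant across the three cases yields $d \le \gamma\, r$ with $\gamma = \max\!\left(\frac{2-\sigma}{2-2\sigma}, \frac{1}{1-\sigma}\right)$, which is exactly \eqref{EQmain1}.

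The main obstacle is precisely this circular dependence in the third case: because the Ćirić maximum contains the term $\frac{1}{2}(\Vert V - FV^{*}\Vert + \Vert V^{*} - FV\Vert)$, the quantity $a$ being estimated reappears inside its own bound, so one must rearrange to isolate $a$ before chaining with the triangle inequality; skipping this rearrangement would yield a weaker or genuinely circular estimate. Finally, statement (2) and \eqref{EQmain2} follow by the identical argument applied to the operator $F_{\mu}$ and its fixed point $V_{\mu}$ (Theorem \ref{Theorem 1}), with $FV$ and $V^{*}$ replaced by $F_{\mu}V$ and $V_{\mu}$ throughout; no new ideas are required.
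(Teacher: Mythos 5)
Your proof is correct, but it follows a genuinely different route from the paper's. The paper argues dynamically: it runs the orbit $V_{k}=FV_{k-1}$ starting from $V$, applies the \'Ciri\'c condition \eqref{KC1} to consecutive iterates, sums the resulting geometric series of increments via the triangle inequality, and then passes to the limit, invoking Lemma \ref{Lemma 2} to identify $\lim_{k}FV_{k}=V^{*}$. You argue statically: a single application of \eqref{KC1} to the pair $(V^{*},V)$, using $FV^{*}=V^{*}$ (Theorem \ref{Theorem 1}) to annihilate one term of the maximum, followed by the triangle inequality $d\le a+r$ and a case split resolved once. Your route buys several things: it needs no series, no limits, and no indexing of iterates; and, importantly, it sidesteps a subtlety in the paper's argument, namely that the paper's chaining (e.g.\ $\Vert FV_{k}-FV_{k-1}\Vert\le\sigma^{k}\Vert FV-V\Vert$ in its first case) tacitly assumes the \emph{same} branch of the maximum is selected at every iteration step, whereas a fully rigorous iterative proof must allow the active case to change with $k$; in your version the case split occurs only once, so no such issue arises. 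You also treat the case $\max=r$ honestly (showing $d\le(1+\sigma)r\le\frac{1}{1-\sigma}r$ is dominated) rather than dismissing it, and your handling of the self-referential case --- isolating $a$ from $a\le\frac{\sigma}{2}(a+d)$ before substituting into $d\le a+r$ --- is exactly the right manoeuvre and reproduces the paper's constant $\frac{2-\sigma}{2-2\sigma}$. What the paper's approach buys instead is that it re-derives convergence of the orbit along the way, but that is already the content of Lemma \ref{Lemma 2}, so nothing is lost in your shorter argument; both proofs rely on the prior existence of the fixed point. One pedantic point: you should note that the branch where the maximum equals $0$ forces $d=0$ and is trivial, so your three cases together with this degenerate one are exhaustive.
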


\begin{proof}
Let us consider $V_{k} = FV_{k-1}$. We have:
\begin{align*}
\begin{split}
    \Vert FV_{k} - FV_{k-1} \Vert \leq & \sigma \max \bigl(\Vert FV_{k-1} - FV_{k-2} \Vert; \Vert FV_{k} - FV_{k-1} \Vert; \Vert FV_{k-1} - FV_{k-2} \Vert;\\
                                    &\frac{1}{2}(\Vert FV_{k-1} - FV_{k-1} \Vert+\Vert FV_{k-2} - FV_{k} \Vert) \bigr)\leq  \sigma \max S_{k}
\end{split}
\end{align*}

where $$   S_{k}= \bigl(\Vert FV_{k-1} - FV_{k-2} \Vert; \Vert FV_{k} - FV_{k-1} \Vert; \frac{1}{2}(\Vert FV_{k-2} - FV_{k} \Vert) \bigr).$$
There are three cases:\\
{\bf First case}: $\max  S_{k}=\Vert FV_{k-1} - FV_{k-2} \Vert $. Then we get:
$$
\Vert FV_{k} - FV_{k-1} \Vert  \leq \sigma \Vert FV_{k-1} - FV_{k-2} \Vert \leq \sigma^{k} \Vert FV - V \Vert.$$
Using the triangle inequality one finds:

\begin{align*}
\begin{split}
    \Vert FV_{k} - V \Vert  &\leq \sum_{n=1}^{k} \| FV_{n} - FV_{n-1} \| \\
                            &\leq  \sum_{n=1}^{k} \sigma^{n-1} \| FV - V \|.
\end{split}
 \end{align*}

Taking the limit as $k \rightarrow +\infty$ and using Lemma \ref{Lemma 2}, we get:
$$
\Vert V^{*} - V\Vert \leq \frac{1 }{1 - \sigma} \Vert FV - V \Vert.
$$
{\bf Second case}: $\max S_{k} =  \Vert FV_{k} - FV_{k-1} \Vert $. This leads us to contradiction as $\sigma < 1$.
\\ {\bf Third case}: $\max S_{k} = \frac{1}{2} \Vert FV_{k-2} - FV_{k} \Vert$. This yields:
$$
\Vert FV_{k} - FV_{k-1} \Vert  \leq  \frac{1}{2}\sigma \Vert FV_{k-2} - FV_{k} \Vert.
$$
$$   \leq  \frac{1}{2}\sigma (\Vert FV_{k-2} - FV_{k-1} \Vert +\Vert FV_{k-1} - FV_{k-2} \Vert).$$
$$   \leq  \frac{\sigma}{2-\sigma}(\Vert FV_{k-2} - FV_{k-1} \Vert $$
Using the triangle inequality one finds:

\begin{align*}
\begin{split}
	\Vert FV_{k} - V \Vert  &\leq \sum_{n=1}^{k} \| FV_{n} - FV_{n-1} \| \\
	&\leq  \sum_{n=1}^{k} \dfrac{\sigma}{2-\sigma}^{n-1} \| FV - V \|.
 \end{split}
\end{align*}
  
Taking the limit as $k \rightarrow +\infty$ and using Lemma \ref{Lemma 2}, we get:
$$
\Vert V^{*} - V\Vert \leq \frac{2-\sigma }{2 - 2\sigma} \Vert FV - V \Vert.
$$
Thus,
$$\Vert V^{*} - V\Vert \leq \gamma \Vert FV - V \Vert. $$
such that: $\gamma = \max \left(\frac{2-\sigma }{2 - 2\sigma},  \frac{1 }{1 - \sigma}\right).$
The proof for the statement 2. can be constructed analogously.
\end{proof}

\begin{remark}\label{Remark1}
If we take $V = V^{*}$, Proposition \ref{PropMain} shows that for $\epsilon > 0$, there exists $\mu_{\epsilon} \in \mathcal{M}$, such that $\Vert V_{\mu_{\epsilon}} - V^{*}\Vert \leq \epsilon$, which may be obtained by letting $\mu_{\epsilon}(x)$ minimize $H(x, u, V^{*})$ over $U(x)$ within an error of $\gamma \epsilon \nu(x)$ for all $x \in X$. 
Indeed, we have:

$$
     \Vert V_{\mu_{\epsilon}} - V^{*}\Vert \leq \gamma\Vert F_{\mu_{\epsilon}}V^{*} - V^{*}\Vert = \gamma \Vert F_{\mu_{\epsilon}}V^{*} - FV^{*}\Vert \leq \epsilon. 
 $$

Thus,
\[
\vert F_{\mu_{\epsilon}}V^{*}(x) - FV^{*}(x)\vert \leq \gamma \epsilon \nu(x).
\]
The importance of monotonicity and  contraction is demonstrated by showing that $V^{*}$, the fixed point of $F$, is the infimum over $\mu \in \mathcal{M}$ of $V_{\mu}$, the unique fixed point of $F_{\mu}$.
\end{remark}

\begin{proposition}
Let assumptions \ref{Assumption 1}, \ref{Assumption 2} and  \ref{Assumption 3} to hold true. Then:
\[
V^{*} (x) = \inf_{\mu \in \mathcal{M}} V_{\mu}(x) \quad \forall x \in X.
\]
\end{proposition}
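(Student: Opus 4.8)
The plan is to prove the identity by establishing the two opposite inequalities $V^{*}(x) \leq \inf_{\mu \in \mathcal{M}} V_{\mu}(x)$ and $\inf_{\mu \in \mathcal{M}} V_{\mu}(x) \leq V^{*}(x)$ separately, for every $x \in X$. Throughout I take for granted that the fixed points $V^{*}$ of $F$ and $V_{\mu}$ of $F_{\mu}$ exist, as guaranteed by Theorem \ref{Theorem 1}.

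For the first inequality, I would start from the fact that $V^{*}$ is the fixed point of $F$, so that $V^{*} = FV^{*} = \inf_{\mu \in \mathcal{M}} F_{\mu} V^{*}$ by the very definition of $F$. In particular $V^{*}(x) \leq F_{\mu} V^{*}(x)$ for every fixed $\mu \in \mathcal{M}$ and all $x$, i.e. $V^{*} \leq F_{\mu} V^{*}$ in the pointwise sense. The key idea is then to iterate $F_{\mu}$: invoking Monotonicity (Assumption \ref{Assumption 3}), which makes $F_{\mu}$ order-preserving on $B(X)$ since $F_{\mu}V(x) = H(x,\mu(x),V)$, I would apply $F_{\mu}$ repeatedly to the relation $V^{*} \leq F_{\mu} V^{*}$ to obtain the nondecreasing chain $V^{*} \leq F_{\mu} V^{*} \leq F_{\mu}^{2} V^{*} \leq \cdots \leq F_{\mu}^{k} V^{*}$. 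By Lemma \ref{Lemma 2}(i) the iterates $F_{\mu}^{k} V^{*}$ converge in norm to $V_{\mu}$, and since norm convergence in $B(X)$ implies pointwise convergence (as noted after Lemma \ref{Lemma 1}), passing to the limit yields $V^{*}(x) \leq V_{\mu}(x)$ for every $x$. As $\mu$ was arbitrary, taking the infimum over $\mu \in \mathcal{M}$ gives $V^{*}(x) \leq \inf_{\mu \in \mathcal{M}} V_{\mu}(x)$.

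For the reverse inequality, I would rely on the $\epsilon$-optimal policy construction of Remark \ref{Remark1}. Given $\epsilon > 0$, choose $\mu_{\epsilon} \in \mathcal{M}$ so that $\mu_{\epsilon}(x)$ minimizes $H(x, u, V^{*})$ over $U(x)$ within the tolerance stated there; Remark \ref{Remark1} then gives $\Vert V_{\mu_{\epsilon}} - V^{*}\Vert \leq \epsilon$. Unpacking the weighted supremum norm, this means $|V_{\mu_{\epsilon}}(x) - V^{*}(x)| \leq \epsilon\,\nu(x)$, hence $\inf_{\mu \in \mathcal{M}} V_{\mu}(x) \leq V_{\mu_{\epsilon}}(x) \leq V^{*}(x) + \epsilon\,\nu(x)$ for every $x$. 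Since $\nu(x)$ is finite and $\epsilon > 0$ is arbitrary, letting $\epsilon \to 0$ yields $\inf_{\mu \in \mathcal{M}} V_{\mu}(x) \leq V^{*}(x)$. Combining the two inequalities produces the claimed identity.

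I expect the main obstacle to be the first inequality, specifically the rigorous justification of the monotone iteration and the passage to the limit. One must verify that Assumption \ref{Assumption 3} genuinely makes each $F_{\mu}$ order-preserving, that each iterate $F_{\mu}^{k} V^{*}$ remains in $B(X)$ so that Lemma \ref{Lemma 2}(i) is applicable (which follows from Well-posedness, Assumption \ref{Assumption 1}), and that the norm limit may be exchanged with the pointwise ordering. The reverse inequality, by contrast, is essentially a restatement of Remark \ref{Remark1} and should be routine.
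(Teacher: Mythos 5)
Your proposal is correct and follows essentially the same route as the paper: the first inequality via $V^{*} = FV^{*} \leq F_{\mu}V^{*}$, monotone iteration of $F_{\mu}$, and passage to the limit $V_{\mu}$; the reverse inequality via the $\epsilon$-optimal policy of Remark \ref{Remark1}. Your version is in fact slightly more careful than the paper's, since you track the weight $\nu(x)$ in the norm bound and justify the exchange of norm convergence with the pointwise ordering, both of which the paper glosses over.
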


\begin{proof}
First, let us show that $V^{*}(x) \leq \displaystyle\inf_{\mu \in \mu \mathcal{M}} V_{\mu}(x)$. For all $\mu \in \mathcal{M}$, we have $FV^{*} \leq F_{\mu} V^{*}$, and since $FV^{*} = V^{*}$, it follows that $V^{*}  \leq F_{\mu} V^{*}$. By applying repeatedly $F_{\mu}$ to both sides of the inequality, and by using the monotonicity assumption, we obtain $V^{*} \leq F_{\mu}^{k} V^{*}$ for all $k > 0$. If $k$ tends to infinity, then $V^{*} \leq V_{\mu}$ for all $\mu \in \mathcal{M}$.

To show that $V^{*}(x) \geq \displaystyle\inf_{\mu \in \mu \mathcal{M}} V_{\mu}(x)$, we apply Remark \ref{Remark1} and get that for all $\epsilon > 0$: $V_{\mu_{\epsilon}} \leq V^{*} + \epsilon$ and hence $V^{*}(x) \geq \displaystyle\inf_{\mu \in \mu \mathcal{M}} V_{\mu}(x)$ for all $x \in X$.
\end{proof}

\section{$\lambda$-Policy Iteration With Randomization }\label{sec:Randomization}

The $ \lambda $-PIR algorithm is a policy iteration iterative procedure with randomization that has been studied by many  authors \cite{Bertsekas2018a,Li2020,ASF}, being infinite policies considered in the later. 



Given some $ \lambda \in [0, 1) $, consider the mappings $ F_\mu^\lambda\in B(X) $ defined pointwisely by:
$$ F_\mu^\lambda V(x)= (1-\lambda) \sum_{l=0}^\infty \lambda^l (F_\mu^{l})V(x).$$
In what follows, we refer to this operator as $\lambda$-operator. For a given $ V_k \in B(X) $, and $ p_k \in (0,1) $, the current policy $\mu$, and the next cost approximation $ V_{k+1}$ are computed as follows:
\begin{eqnarray}
&& F_\mu V_k = F V_k \label{Step 1}\\
&& V_{k+1}=\left\{\begin{array}{ll}
F_\mu V_k & \mbox{ w.p. } p_k \vspace{.1cm}\\
F_\mu^\lambda V_k & \mbox{ w.p. } 1- p_k
\end{array}\right.\label{random-iter-policy}
\end{eqnarray}
Here, ''w.p.'' stands for ``with probability''. The following result holds:
\begin{theorem}\label{Theorem A} Let $  F_\mu $ be a  \'Ciri\'c self operator on $ B(X) $. Then, the function $ F_\mu^\lambda $ satisfying the following properties:
\begin{itemize}
\item $F_\mu^\lambda$ is well defined.
\item $F_\mu^\lambda$ is well posed.
\item $F_\mu^\lambda$ is a \'Ciri\'c's contraction with $  \displaystyle \rho = \sum_{l=1}^\infty (1- \lambda) \lambda^{l}  k^{l}$.
\end{itemize}
\end{theorem}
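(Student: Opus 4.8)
The plan is to establish the three properties in order, grouping the first two since they both follow from a single uniform bound on the iterates $F_\mu^l V$, and then to treat the contraction estimate, which carries the real content. The starting observation is that, because $F_\mu\colon B(X)\to B(X)$ is a \'Ciri\'c operator, Theorem~\ref{Theorem 1} provides a fixed point $V_\mu$ and Lemma~\ref{Lemma 2}(i) gives $F_\mu^l V\to V_\mu$ in norm for every $V\in B(X)$. A norm-convergent sequence is bounded, so there is a constant $C=C(V)$ with $\|F_\mu^l V\|\le C$ for all $l\ge 0$.

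For \textbf{well-definedness}, I would consider the partial sums $S_N:=(1-\lambda)\sum_{l=0}^{N}\lambda^l F_\mu^l V$ and estimate, for $N>M$,
\[
\|S_N-S_M\|\le (1-\lambda)\,C\sum_{l=M+1}^{N}\lambda^l .
\]
Since $\sum_l\lambda^l$ converges for $\lambda\in[0,1)$, the right-hand side tends to $0$ as $M\to\infty$, so $\{S_N\}$ is Cauchy and, by the completeness of $B(X)$ from Lemma~\ref{Lemma 1}, converges to an element of $B(X)$; this is the value $F_\mu^\lambda V$. For \textbf{well-posedness} the same bound gives $\|F_\mu^\lambda V\|\le (1-\lambda)\sum_{l=0}^\infty\lambda^l\|F_\mu^l V\|\le C<\infty$, hence $F_\mu^\lambda V\in B(X)$.

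The \textbf{contraction} property is the crux. I would begin from the identity
\[
F_\mu^\lambda V-F_\mu^\lambda V'=(1-\lambda)\sum_{l=0}^\infty\lambda^l\bigl(F_\mu^l V-F_\mu^l V'\bigr),
\]
and pass to norms, so that the task reduces to controlling each term $\|F_\mu^l V-F_\mu^l V'\|$ by $k^l$ times a \'Ciri\'c-type maximum of $V$ and $V'$; summing the weights $(1-\lambda)\lambda^l k^l$ then produces exactly the announced constant $\rho=\sum_{l=1}^\infty(1-\lambda)\lambda^l k^l$, which satisfies $\rho<1$ for $\lambda,k\in(0,1)$.

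The step I expect to be the main obstacle is precisely this per-term estimate, because a \'Ciri\'c operator does not iterate as cleanly as a Banach contraction: applying \eqref{KC1} to $F_\mu(F_\mu^{l-1}V)$ and $F_\mu(F_\mu^{l-1}V')$ produces a maximum that contains not only the previous difference $\|F_\mu^{l-1}V-F_\mu^{l-1}V'\|$ but also the consecutive-iterate terms $\|F_\mu^{l-1}V-F_\mu^l V\|$ and the mixed terms, so a direct induction does not close. To tame these I would reuse the geometric decay of consecutive differences established inside the proof of Proposition~\ref{PropMain}, namely $\|F_\mu^{l}V-F_\mu^{l-1}V\|\le k^{\,l-1}\|F_\mu V-V\|$, which summed yields $\|F_\mu^l V-V_\mu\|\le \frac{k^l}{1-k}\|F_\mu V-V\|$ and likewise for $V'$. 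Feeding these into $\|F_\mu^l V-F_\mu^l V'\|\le\|F_\mu^l V-V_\mu\|+\|V_\mu-F_\mu^l V'\|$ bounds each term by $k^l$ times a combination of $\|V-F_\mu V\|$ and $\|V'-F_\mu V'\|$, both of which are entries of the maximum in \eqref{KC1}. The delicate bookkeeping will then be to absorb the resulting constant into the \'Ciri\'c maximum so that the coefficients assemble into the stated geometric sum $\rho$ rather than a larger multiple; I view reconciling these constant factors, and handling the $l=0$ term separately, as the finicky part of the argument, while convergence of the geometric series itself is routine.
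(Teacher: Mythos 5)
Your treatment of the first two bullets is correct and in fact cleaner than the paper's: the paper argues pointwise convergence of the partial sums and then re-derives membership in $B(X)$ through a four-case \'Ciri\'c estimate against $V_\mu$, whereas you obtain both well-definedness and well-posedness at once from boundedness of the norm-convergent sequence $\{F_\mu^l V\}$ (Lemma \ref{Lemma 2}) together with completeness of $B(X)$ (Lemma \ref{Lemma 1}). That part stands.

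The contraction bullet, however, contains a genuine gap, and it sits exactly where you wrote ``delicate bookkeeping'': it is not bookkeeping, it is structural. Your detour through the fixed point gives
\[
\|F_\mu^l V-F_\mu^l V'\|\le \|F_\mu^l V-V_\mu\|+\|V_\mu-F_\mu^l V'\|\le \frac{k^{l}}{1-k}\bigl(\|V-F_\mu V\|+\|V'-F_\mu V'\|\bigr),
\]
so after weighting by $(1-\lambda)\lambda^l$ and summing you get a coefficient of order $\frac{2\rho}{1-k}$ in front of the \'Ciri\'c maximum, plus the separate $l=0$ contribution $(1-\lambda)\|V-V'\|$. The factor $\frac{1}{1-k}$ comes from summing the consecutive-difference series and cannot be absorbed afterwards: since $\rho=(1-\lambda)\frac{\lambda k}{1-\lambda k}$, the total constant $(1-\lambda)+\frac{2\rho}{1-k}$ blows up as $k\to 1^-$, so this route does not even show that $F_\mu^\lambda$ is contractive in any modulus, let alone the stated $\rho=\sum_{l\ge 1}(1-\lambda)\lambda^l k^{l}$. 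The paper takes a different route that avoids the triangle-inequality splitting entirely: it applies the \'Ciri\'c inequality \eqref{KC1} directly to the pair of $(l-1)$-st iterates and argues case-by-case that
\[
\|F_\mu^{l}V-F_\mu^{l}V'\|\le k^{l}\max\Bigl(\|V-V'\|,\;\|V-F_\mu V\|,\;\|V'-F_\mu V'\|,\;\tfrac{1}{2^{l}}\bigl(\|V-F_\mu V'\|+\|V'-F_\mu V\|\bigr)\Bigr),
\]
i.e.\ each term carries exactly $k^{l}$ while the \'Ciri\'c max structure is preserved (the mixed term acquiring the factor $2^{-l}$), so the weighted sum is exactly $\rho$. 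To repair your argument you would need a per-iterate bound of this type --- for instance via \'Ciri\'c's orbital-diameter technique, which controls $\|F_\mu^l V - F_\mu^l V'\|$ without passing through $V_\mu$ --- rather than the detour through the fixed point, which irrecoverably loses the constant.
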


\begin{proof}
\begin{itemize}

Let us consider the sequence $ \left\{ \phi_{l}\right\}_{k=1}^{\infty} $ defined by $\displaystyle \phi_{l} = \sum_{l=1}^{k} \alpha_{l } (F_{\mu}^{l}V ) (x) $, where $ \alpha_{l }= (1- \lambda) \lambda^l.$ From lemma \ref{Lemma 2}, we have that $ F_{\mu}^{l} V(x) \rightarrow V_{\mu} (x) \in \mathbb{R}$ ($ \forall \epsilon > 0,  \vert F_\mu^\lambda  V_\mu(x)- V_\mu (x) \vert < \epsilon $), and, thus, $ \{ F_{\mu}^{l} V(x) \}_{l=1}^{\infty} $ is bounded.
Note that $F_\mu^\lambda  V_\mu = V_\mu$, otherwise, if $ V \neq V_\mu $, we would have

\begin{align*}\label{wdi}
\begin{split}
| F_\mu^\lambda V(x)- V_\mu(x) | \quad & =  \quad \left| \sum_{l=0}^\infty \alpha_l F_\mu^lV (x) - V_\mu (x)\right| \\
& =  \quad \left | \sum_{l=1}^\infty \alpha_l F_\mu^lV (x) - \sum_{l=1}^\infty \alpha_l F_\mu^l V_\mu(x) \right| \\
& \leq \quad \sum_{l=1}^\infty\alpha_l |F_\mu^l V (x)- F_\mu^l V_\mu (x)| \leq \sum_{l=1}^\infty\alpha_l \epsilon \leq \epsilon
\end{split}
\end{align*}
Therefore,  for all $ V \in B(X), x \in X $, the sequence $  \{ \phi_{l} \} $ converges in $ \mathbb{R} $. Next, we discuss the well-posedness of $F_\mu^\lambda$. 

Due to the fact that $F_\mu$ is a \'Ciri\'c  operator for all $l \in \mathbb{N}$, we have:
\begin{align*}
    \begin{split}
    \| F_\mu^{l}V  - F_\mu^{l} V_\mu \| \leq &  k\max ( \| F_\mu^{l-1}V - F_\mu^{l-1}V_\mu\|, \| F_\mu^{l-1} V - F_\mu^{l} V \|, \| F_\mu^{l-1} V_\mu  - F_\mu^{l-1} V_\mu \| ) ,\\
                                             & \frac{1}{2}(\| F_\mu^{l-1}V - F_\mu^{l}V_\mu\|+\| F_\mu^{l-1}V_\mu - F_\mu^{l}V\|) = k \max M(V, V_\mu).
     \end{split}
\end{align*}

\textbf{First case:} $\max M(V, V_\mu)= \| F_\mu^{l-1}V - F_\mu^{l-1}V_\mu\|$.
Then:
\begin{equation*}
     \| F_\mu^{l}V  - F_\mu^{l} V_\mu \| \leq  k( \| F_\mu^{l-1}V - F_\mu^{l-1}V_\mu\|) \leq k^{l}  \| V - V_\mu\|
\end{equation*}
   
\textbf{Second case:} 
$\max M(V, V_\mu)= \| F_\mu^{l-1} V - F_\mu^{l} V \|$.
Then:
\begin{equation*}
     \| F_\mu^{l}V  - F_\mu^{l} V_\mu \| \leq  k( \| F_\mu^{l-1}V - F_\mu^{l}V\|)
    \leq k^{l} \| V -  F_\mu V \|
\end{equation*}

\textbf{Third case:} $\max M(V, V_\mu)= \| F_\mu^{l-1} V_\mu - F_\mu^{l} V_\mu\|$. 
Then:
\begin{equation*}
       \| F_\mu^{l}V  - F_\mu^{l} V_\mu \| \leq  k\| F_\mu^{l-1} V_\mu - F_\mu^{l} V_\mu\|  \leq k^{l}  \| V_\mu  -  F_\mu V_\mu  \|
\end{equation*}

\textbf{Fourth case:}
$\max M(V, V_\mu)= \frac{1}{2}(\| F_\mu^{l-1}V - F_\mu^{l}V_\mu\|+\| F_\mu^{l-1}V_\mu - F_\mu^{l}V\|)$. Then:
\begin{align*}
    \begin{split}
    \| F_\mu^{l}V  - F_\mu^{l} V_\mu \| \leq  \frac{k}{2}(\| F_\mu^{l-1}V - F_\mu^{l}V_\mu\|+\| F_\mu^{l-1}V_\mu - F_\mu^{l}V\|) , \\
    \leq \frac{k^l}{2^l}(\| V -  F_\mu V_\mu \|+ \| V_\mu  -  F_\mu V \|) 
    \end{split}
\end{align*}
Hence,
\begin{align*}
 \begin{split}
    \| F_\mu^{l}V  - F_\mu^{l} V_\mu \| \leq  & k^l \max ( \| V - V_\mu\|, \| V -  F_\mu V \|, \| V_\mu  -  F_\mu V_\mu  \| , \\
    & \frac{1}{2^l} (\| V -  F_\mu V_\mu \|+ \| V_\mu  -  F_\mu V \|))  
 \end{split} 
\end{align*}
and, thus,
\begin{align*}
\begin{split}
     | F_\mu^{l}V (x)- F_\mu^{l} V_\mu (x) | \leq & k^l \max ( \| V - V_\mu\|,  \| V -  F_\mu V \|, \| V_\mu  -  F_\mu V_\mu  \| , \\
    &\frac{1}{2^l}(\| V -  F_\mu V_\mu \|+ \| V_\mu  -  F_\mu V \|) \nu (x). 
\end{split}
\end{align*}
and due to $$ | F_\mu^\lambda V(x)- V_\mu(x) | \leq   \sum_{l=1}^\infty \alpha_l |F_\mu^l V (x)- F_\mu^l V_\mu (x)| $$
Therefore, we have four cases:\\
\textbf{First case:}\\
\begin{align*}
    | F_\mu^\lambda V(x)- V_\mu(x)| \leq & \sum_{l=1}^\infty \alpha_l  k^{l} ( \| V - V_\mu\| )\nu (x)\leq \overline{k_{1}}(\| V - V_\mu\|  )\nu (x)  .
\end{align*}
Thus,
\begin{align*}
    \| F_\mu^\lambda V \| \leq \overline{k_{1}} (\| V - V_\mu\|  )\nu (x)+ \Vert V_{\mu}\Vert
\end{align*}

\textbf{Second case:}\\
\begin{align*}
\begin{split}
    | F_\mu^\lambda V(x)- V_\mu(x)| \leq  &\sum_{l=1}^\infty \alpha_l  k^{l} ( \| V -  F_\mu V \|)\nu (x)  \leq \overline{k_{1}}(\| V -  F_\mu V \|  )\nu (x)  . 
\end{split}  
\end{align*}
Thus,
\begin{align*}
    \| F_\mu^\lambda V \| \leq \overline{k_{1}} (\| V -  F_\mu V \|  )\nu (x)+ \Vert V_{\mu}\Vert
\end{align*}

\textbf{Third case:}\\
\begin{align*}
  | F_\mu^\lambda V(x)- V_\mu(x)| &\leq \sum_{l=1}^\infty \alpha_l  k^{l} ( \| V_\mu  -  F_\mu V_\mu  \| )\nu (x) \\
  &\leq \overline{k_{1}}(\| V_\mu  -  F_\mu V_\mu\| )\nu (x)  .
\end{align*}
Thus,
\begin{align*}
    \| F_\mu^\lambda V \| \leq \overline{k_{1}} ( \| V_\mu  -  F_\mu V_\mu  \| )\nu (x)+ \Vert V_{\mu}\Vert
\end{align*}
\textbf{Fourth case:}\\
\begin{align*}
\begin{split}
      | F_\mu^\lambda V(x)- V_\mu(x)| \leq &\sum_{l=1}^\infty \alpha_l  k^{l}  \frac{1}{2^l} (\| V -  F_\mu V_\mu \|+ \| V_\mu  -  F_\mu V \|) \nu (x)\\
   & \leq \overline{k_{1}} \frac{1}{2^l} (\| V -  F_\mu V_\mu \|+ \| V_\mu  -  F_\mu V \|)\nu (x).
\end{split} 
\end{align*}
Thus,
\begin{align*}
    \| F_\mu^\lambda V \| \leq \overline{k_{1}} ( \frac{1}{2^l} (\| V -  F_\mu V_\mu \|+ \| V_\mu  -  F_\mu V \|) )\nu (x)+ \Vert V_{\mu}\Vert
\end{align*}
   
Where $\overline{k_{1}}  $ is given by:
\begin{equation*}
    \overline{k_{1}} = \sup_{x \in X} \sum_{l=1}^\infty   \alpha_l  k^{l} 
\end{equation*}

Since $ V_{\mu}\in B(X) $, we obtain that $F_\mu^\lambda V\in B(X)$.

 $ F_{\mu}^{\lambda} $ is \'Ciri\'c contraction operator with the modulus 
 $ \displaystyle\rho = \sum_{l=1}^\infty   (1- \lambda) \lambda^{l} k^{l} $.\\
 \begin{align*}
\| F_{\mu}^{\lambda}V - F_{\mu}^{\lambda}V^{'}\| &\leq \left\| \sum_{l=1}^\infty  (1- \lambda) \lambda^{l} ( F_{\mu}^l V- F_{\mu}^l V^{'} ) \right\|\\
& \leq \sum_{l=1}^\infty (1- \lambda) \lambda^{l} \| ( F_{\mu}^l V- F_{\mu}^l V^{'} ) \|\\
 & \leq \sum_{l=1}^\infty   (1- \lambda) \lambda^{l}  k^l \max ( \| V - V^{'}\|, \\
 & \quad \| V -  F_\mu V \|, \| V^{'}  -  F_\mu V^{'} \| , \\
 & \quad \frac{1}{2^l} (\| V -  F_\mu V^{'}\|+ \|  V^{'} -  F_\mu V \|))\\
 & \leq \rho \max ( \| V - V^{'}\|,\| V -  F_\mu V \|, \| V^{'}  -  F_\mu V^{'}  \| ) \\
 & \quad + \frac{1}{2} (\| V -  F_\mu V^{'} \|+ \|V^{'}   -  F_\mu V \|))
\end{align*}
\end{itemize}
\end{proof}


Alternatively, the monotonicity of $ F_{\mu}^{\lambda} $ can be derived from the monotonicity property inherent in $ F_{\mu} $. This is summarized by the following lemma.
\begin{lemma}\label{Lemma 3}
Let $ F_\mu : B(X) \rightarrow B(X) $  satisfy the definition \ref{Definition 1} and  Assumption \ref{Assumption 3}. Then the mappings  $ F_{\mu}^{\lambda} $ are monotonic in the sense that
$$  V \leq V^{'} \rightarrow F_{\mu}^{\lambda}V  \leq F_{\mu}^{\lambda}V^{'}\;\; \forall\; x \in X , \;\mu \in M. $$
\end{lemma}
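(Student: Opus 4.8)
The plan is to reduce the monotonicity of $F_\mu^\lambda$ to the monotonicity of the iterates $F_\mu^l$ and then to transfer the resulting pointwise inequality through the defining series. First I would establish the base case: for the single-step operator $F_\mu$, monotonicity follows immediately from Assumption \ref{Assumption 3}. Indeed, since $F_\mu V(x) = H(x, \mu(x), V)$ with $\mu(x) \in U(x)$, the hypothesis $V \leq V'$ pointwise gives $H(x, \mu(x), V) \leq H(x, \mu(x), V')$ for every $x \in X$, that is, $F_\mu V \leq F_\mu V'$.

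Next I would argue by induction on $l$ that each power $F_\mu^l$ is monotone. The case $l=0$ is trivial and $l=1$ is the base case above. Assuming $F_\mu^{l-1} V \leq F_\mu^{l-1} V'$, applying the monotone operator $F_\mu$ to both sides yields $F_\mu^l V = F_\mu(F_\mu^{l-1}V) \leq F_\mu(F_\mu^{l-1}V') = F_\mu^l V'$. Hence $V \leq V'$ implies $F_\mu^l V \leq F_\mu^l V'$ for every $l \geq 0$.

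Finally I would assemble the conclusion from the definition $F_\mu^\lambda V(x) = (1-\lambda)\sum_{l=0}^\infty \lambda^l (F_\mu^l V)(x)$. Since every coefficient $(1-\lambda)\lambda^l$ is nonnegative and each term satisfies $(F_\mu^l V)(x) \leq (F_\mu^l V')(x)$, every finite partial sum obeys the same pointwise inequality. For each fixed $x \in X$ the defining series converges in $\mathbb{R}$ by the well-definedness established in Theorem \ref{Theorem A}, so letting the number of terms tend to infinity preserves the non-strict inequality, giving $F_\mu^\lambda V(x) \leq F_\mu^\lambda V'(x)$ for all $x \in X$ and $\mu \in M$.

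The main obstacle I anticipate is precisely this limiting step: one must be certain that term-by-term domination of the partial sums is legitimately inherited by the limit. This is secured by combining the convergence of the series (Theorem \ref{Theorem A}) with the elementary fact that a non-strict inequality between two convergent real sequences passes to their limits. The \'Ciri\'c hypothesis of Definition \ref{Definition 1} enters the argument only indirectly, through the convergence guarantee for $F_\mu^\lambda$, and plays no further role in the monotonicity reasoning itself.
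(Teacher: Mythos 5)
Your proof is correct. Note that the paper actually states Lemma \ref{Lemma 3} \emph{without} proof---only the preceding remark that monotonicity of $F_{\mu}^{\lambda}$ ``can be derived from the monotonicity property inherent in $F_{\mu}$''---and your argument (monotonicity of $F_{\mu}$ directly from Assumption \ref{Assumption 3}, induction to get monotonicity of each power $F_{\mu}^{l}$, then term-by-term comparison of the defining series using the nonnegativity of the coefficients $(1-\lambda)\lambda^{l}$ and preservation of non-strict inequalities in the limit) is exactly the intended route, supplying the details the paper omits.
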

\section{Convergence Of The  $\lambda$-PIR Algorithm}\label{sec:Convergence+Stability}

In this section, we establish the convergence of the $\lambda$-PIR algorithm under condition weaker than those considered so far. The main result of this section reads as follows:
\begin{theorem}\label{ConvTheor}
Let assumptions \ref{Assumption 1},\ref{Assumption 2},  \ref{Assumption 3} and definition \ref{Definition 1} hold, given $ V_{0} \in B(X)$ such that $ FV_{0} < V_{0} $, then the sequence $ \{ V_{k} \} $ generated by algorithm represented by \eqref{Step 1} and \eqref{random-iter-policy} converges in norm with probability one.
\end{theorem}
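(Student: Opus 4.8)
The plan is to show that, almost surely, the generated sequence is monotone non-increasing and trapped in the order interval $[V^{*}, V_{0}]$, to extract a pointwise limit by monotone convergence, and then to upgrade this to convergence in $\|\cdot\|$ while identifying the limit as $V^{*}$, using Proposition~\ref{PropMain} together with the genuine contractivity of the $\lambda$-operator established in Theorem~\ref{Theorem A}. Throughout I would use Assumption~\ref{Assumption 3} in the equivalent form that $F$, each $F_{\mu}$, and, by Lemma~\ref{Lemma 3}, each $F_{\mu}^{\lambda}$ are monotone, and the greedy rule~\eqref{Step 1} with Assumption~\ref{Assumption 4} to produce at every stage a policy $\mu_{k}$ with $F_{\mu_{k}}V_{k} = FV_{k}$.

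First I would establish, by induction on $k$ and for every realization of the randomization, the invariant $FV_{k}\le V_{k}$, which forces $V^{*}\le V_{k+1}\le V_{k}$. The base case is the standing hypothesis $FV_{0} < V_{0}$; iterating $F$ and invoking Lemma~\ref{Lemma 2}(ii) already yields $V^{*}\le V_{0}$. For the inductive step, $F_{\mu_{k}}V_{k} = FV_{k}\le V_{k}$ and monotonicity of $F_{\mu_{k}}$ make the orbit $\{F_{\mu_{k}}^{l}V_{k}\}_{l\ge 0}$ non-increasing, so both branches of~\eqref{random-iter-policy} give $V_{k+1}\le V_{k}$: on the deterministic branch this is immediate, and on the $\lambda$-branch it holds because $F_{\mu_{k}}^{\lambda}V_{k}$ is a convex combination of that non-increasing orbit, all of whose terms are dominated by $V_{k}$. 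The lower bound is symmetric: $V_{k}\ge V^{*}$ gives $F_{\mu_{k}}V^{*}\ge FV^{*}=V^{*}$, hence $F_{\mu_{k}}^{l}V^{*}\ge V^{*}$ for all $l$ and $F_{\mu_{k}}^{\lambda}V^{*}\ge V^{*}$, and monotonicity of $F_{\mu_{k}}$ and $F_{\mu_{k}}^{\lambda}$ carries this to $V_{k+1}$.

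The delicate point — and the step I expect to be the main obstacle — is propagating the invariant itself, namely deducing $FV_{k+1}\le V_{k+1}$ after a $\lambda$-step. Bounding $FV_{k+1}\le F_{\mu_{k}}V_{k+1}=F_{\mu_{k}}(F_{\mu_{k}}^{\lambda}V_{k})$ reduces this to $F_{\mu_{k}}(F_{\mu_{k}}^{\lambda}V_{k})\le F_{\mu_{k}}^{\lambda}V_{k}$, and this does \emph{not} follow from monotonicity alone, since $F_{\mu_{k}}$ need not commute with the averaging that defines $F_{\mu_{k}}^{\lambda}$. Here I would exploit the multistep identity
\[ F_{\mu}^{\lambda}V = (1-\lambda)V + \lambda\, F_{\mu}^{\lambda}(F_{\mu}V), \]
together with the monotonicity of $F_{\mu}^{\lambda}$, and, should monotonicity prove insufficient, the fact that $F$ is an infimum over all policies, to close the induction. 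Granting the invariant, monotone convergence yields a pointwise limit $\bar V$ with $V^{*}\le \bar V \le V_{0}$ almost surely.

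It then remains to obtain convergence in norm and to identify $\bar V = V^{*}$, which is precisely where the randomization and Theorem~\ref{Theorem A} are essential: a \'Ciri\'c operator $F$ may be discontinuous, so one cannot pass to the limit in $FV_{k}$ directly. I would instead invoke Proposition~\ref{PropMain}, $\|V^{*}-V_{k}\|\le \gamma\|FV_{k}-V_{k}\|$, and show that $\|FV_{k}-V_{k}\|\to 0$ with probability one. The $\lambda$-branch, which is taken infinitely often almost surely by a Borel--Cantelli argument under a persistent-exploration condition such as $\sum_{k}(1-p_{k})=\infty$, contracts with modulus $\rho<1$; combined with the monotone decrease this drives the residual $\|FV_{k}-V_{k}\|$ to zero, whence $\|V_{k}-V^{*}\|\to 0$ with probability one.
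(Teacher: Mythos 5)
Your skeleton matches the paper's: both proofs run on the monotone invariant $FV_k \le V_k$, the resulting sandwich $V^{*}\le V_{k+1}\le V_k$ on every realization, and Lemma~\ref{Lemma 3} for monotonicity of the $\lambda$-operator. You also correctly isolate the genuine crux, namely propagating the invariant through the $\lambda$-branch. The paper closes that step exactly where you suspected the obstruction lies: it explicitly invokes the commutation $F_{\mu^{0}}\bigl(F_{\mu^{0}}^{\lambda}V_{0}\bigr)=F_{\mu^{0}}^{\lambda}\bigl(F_{\mu^{0}}V_{0}\bigr)$ (``$F_{\mu^{0}}^{\lambda}$ and $F_{\mu}$ can commute''), so that monotonicity of $F_{\mu^{0}}^{\lambda}$ applied to $F_{\mu^{0}}V_{0}\le V_{0}$ gives $F(F_{\mu^{0}}^{\lambda}V_{0})\le F_{\mu^{0}}(F_{\mu^{0}}^{\lambda}V_{0})\le F_{\mu^{0}}^{\lambda}V_{0}$. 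Your proposed substitute, the identity $F_{\mu}^{\lambda}V=(1-\lambda)V+\lambda F_{\mu}^{\lambda}(F_{\mu}V)$, is algebraically true but does not close the induction: applying the nonlinear $F_{\mu}$ to it yields nothing, and monotonicity only recovers $F_{\mu}^{\lambda}V\le V$, not $F_{\mu}\bigl(F_{\mu}^{\lambda}V\bigr)\le F_{\mu}^{\lambda}V$. So on this point you and the paper face the same hole; the paper fills it by (implicitly) assuming commutation, and your alternative does not actually avoid that assumption.

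The concrete gap is in your endgame, and it is avoidable. From your own induction you already have the stronger upper bound $V_{k+1}\le F_{\mu_k}V_k = FV_k$, not merely $V_{k+1}\le V_k$; combining it with monotonicity of $F$ gives $V^{*}\le V_k\le F^{k}V_0$ for every realization. Since $F^{k}V_0\to V^{*}$ \emph{in norm} by Lemma~\ref{Lemma 2}(ii), the pointwise order sandwich yields $|V_k(x)-V^{*}(x)|\le |F^{k}V_0(x)-V^{*}(x)|\le \|F^{k}V_0-V^{*}\|\,\nu(x)$, hence $\|V_k-V^{*}\|\le\|F^{k}V_0-V^{*}\|\to 0$ surely, with no condition whatsoever on $\{p_k\}$. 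This is precisely the paper's conclusion. Your route instead (i) imports a persistent-exploration hypothesis $\sum_k(1-p_k)=\infty$ that is not in the theorem statement, so at best you prove a weaker result, and (ii) leaves unproven the claim that $\|FV_k-V_k\|\to 0$: the contraction modulus $\rho$ of $F_{\mu_k}^{\lambda}$ from Theorem~\ref{Theorem A} pulls the iterate toward the \emph{moving} fixed point $V_{\mu_k}$, which changes at every policy update, and no argument is given that this drives the residual of $F$ to zero. Both defects disappear once you notice that the upper envelope $F^{k}V_0$ is itself norm-convergent, which makes the entire probabilistic apparatus unnecessary.
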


\begin{proof}
Since $ FV_{0} < V_{0} $, we have $ F_{\mu^{0}}V_{0}= FV_{0} < V_{0} $.

By monotonocity of $ F_{\mu^{0}} $, we have

$$ F_{\mu^{0}}^{l}V_{0} \leq F_{\mu^{0}}^{l-1}V_{0}, \quad F_{\mu^{0}}^{l}V_{0}\leq FV_{0},\quad \forall l  \in \mathbb{N}$$ which implies that
$$  F_{\mu^{0}}^{\lambda}V_{0} \leq F_{\mu^{0}}V_{0} \leq V_{0}. $$
This means that $ V_{1} $ is bounded from above with probability one by $ FV_{0} $. In addition, we also have $ V_{\mu^{0}} \geq V^{*} $ where $  V_{\mu^{0}} $ is the fixed point of both $ F_{\mu^{0}}^{\lambda} $, and $F_{\mu^{0}} $, thus, we have that
$$  V^{*} \leq V_{\mu^{0}}\leq F_{\mu^{0}}^{\lambda}V_{0} \leq F_{\mu^{0}}V_{0}. $$
This entails that $ V_{1} $ is bounded from below by $ V^{*} $ with probability one.

Now, due to that $ F $ is a \'Ciri\'c operator  we have:
$$ F^{2} V_{0}= F(F_{\mu^{0}}V_{0} )\leq F_{\mu^{0}}V_{0},$$
and, due to the monotonicity of $ F_{\mu^{0}}^{\lambda} $, and since $ F_{\mu^{0}}^{\lambda} $ and $ F_{\mu} $ can commute, we have that
$$ F(F_{\mu^{0}}^{\lambda} V_{0}) \leq  F_{\mu^{0}}(F_{\mu^{0}}^{\lambda} V_{0}) \leq  F_{\mu^{0} }^{\lambda}V_{0}. $$

Applying the iterations, we get that
$$ V^{*} \leq  V_{k} \leq F^{k} V_{0},$$
meaning that $ \{V_{k} \} $ converges in norm to $ V^{*} $ with probability one.
\end{proof}


\section{Conclusion}\label{Con}
In this paper, using techniques from the fixed point theory, we have extended the previously developed strongly contractive operators reinforcement learning process, denoted by Lambda policy iteration with randomization, to a broader class of contractive operators.  We have demonstrated the well-posedness, a \'Ciri\'c contractiveness, and well-definition of the Lambda policy iteration scheme in Banach spaces by using  a \'Ciri\'c fixed point properties. Our further steps in this line of research will be relate to the convergence characteristics of the corresponding computational algorithms, driving our efforts to produce outcomes that are comparable to those of generalised metric spaces.

\section*{Acknowledgments}
AB acknowledges the support of FCT for the grant 2021.07608.BD. RC acknowledges the financial support by the FCT  doi:10.54499/CEECINST/00010/2021/ CP1770/CT0006.
This work was also supported by multiple funding sources including the: Base funding (UIDB/00147/2020) and Programmatic funding (UIDP/00147/2020) of the SYSTEC -- Center for Systems and Technologies; ARISE - Associate Laboratory for Advanced Production and Intelligent Systems (LA/P/0112/2020); RELIABLE project (PTDC/EEI-AUT/3522/2020); MLDLCOV project (DSAIPA/CS/0086/ 2020), under the program IA 4 COVID-19: Data Science and Artificial Intelligence in Public Administration to strengthen the fight against COVID-19 and future pandemics in 2020; both funded by national funds through the FCT/MCTES (PIDDAC).

\end{document}